\numberwithin{equation}{section}
\numberwithin{figure}{section}
\theoremstyle{definition}
\newtheorem{theorem}{Theorem}
\newtheorem{lemma}[theorem]{Lemma}
\newtheorem{question}[theorem]{Question}
\begin{document}

\title{Infinitely supported Liouville measures of Schreier graphs}
\author{Kate Juschenko}

\address{Kate Juschenko, Department of Mathematics, Northwestern University,
2033 Sheridan Road, Evanston, IL 60208, USA}

\email{kate.juschenko@gmail.com}

\author{Tianyi Zheng}

\address{Tianyi Zheng, Department of Mathematics, UC San Diego, 9500 Gilman
Dr, La Jolla, CA 92093, USA}

\email{zhengty04@gmail.com}

\date{\today}

\maketitle

\begin{abstract}
We provide equivalent conditions for Liouville property of actions of
groups. As an application, we show that there is a Liouville measure
for the action of the Thompson group $F$ on dyadic rationals. This
result should be compared with a recent result of Kaimanovich, where
he shows that the action of the Thompson group F on dyadic rationals
is not Liouville for all finitely supported measures. As another application
we show that there is a Liouville measure for lamplighter actions.
This gives more examples of non-amenable Liouville actions.
\end{abstract}

\section{introduction}

Let a discrete group $G$ act on a countable set $X$, denoted by
$G\curvearrowright X$, and let $\mu$ be a probability measure on
$G$. A measure $\mu$ on $G$ is non-degenerate if $\mbox{supp}\mu$
generate the group $G$. We denote by $P_{\mu}$ the transition matrix
on $X$ induced by $\mu$, that is 
\[
P_{\mu}(x,y)=\sum_{g\in G}\mathbf{1}_{\{g\cdot x=y\}}\mu(g).
\]
For the simplicity of the notations we write $\mu\cdot x=P_{\mu}(x,\cdot)$.
A function $f:X\to\mathbb{R}$ is $P_{\mu}$-harmonic if $f(x)=\sum_{y\in X}f(y)P_{\mu}(x,y)$,
and $(X,P_{\mu})$ is Liouville if all bounded $P_{\mu}$-harmonic
functions are constant. The action $G\curvearrowright X$ is $\mu$-Liouville
if $(X,P_{\mu})$ is Liouville, and if this is the case we say $\mu$
is a Liouville measure for the action. We call an action Liouville
if there is a measure $\mu$ on $G$ which makes it $\mu$-Liouville.

Note that, one can make several definitions for Liouville actions
by adapting the definition of Liouville measures on Cayley graphs.
While many of the definitions are equivalent for Cayley graphs, they
are not equivalent when we pass to actions. The main reason of our
current definition is a recent approach to amenability developed by
Kaimanovich in \cite{Kaimanovich}. In order to show that a group
is not amenable it is sufficient to find an action which does not
admit any non-degenerate Liouville measure. Indeed, this will insure
that there is no non-denegerate Liouville measure on the group itself,
thus, by renowned result of Kaimanovich and Vershik, \cite{KaimVersh},
the group is not amenable. The problem of amenability of Thompson's
group F can be approached with this technique. In particular, Kaimanovich
showed that every finitely supported non-degenerate measure on Thompson
group F is not Liouville for it's action on dyadic rationals. In this
paper we show that this action admits infinitely supported Liouville
measure.

In fact, our methods are more general, we give a criteria for a measure
to be Liouville for the action. As another application of this method
we show that certain Schreier graphs of lamplighter group are non-amenable
but $\mu$-Liouville for some infinitely supported measure $\mu$.
We note that examples of non-amenable graphs with Liouville measures
of finite support were previously have been known, see for example \cite{Ben-Koz} and references therein.\\

{\bf Acknowledgements:} We are grateful to Kaimanovich for several interesting and motivating discussions.   We also thank Nico Matte Bon and Omer Tamusz for remarks on earlier versions of the paper.

\section{Liouville measures on Schreier graphs}

We fix a finite generating set $S$ on $G$. Consider an action of
$G$ on a countable set $X$. Let $\left\Vert \cdot\right\Vert _{1}$
be the $\ell^{1}$ norm with respect to counting measure on $X$,
that is $\left\Vert f\right\Vert _{1}=\sum_{x\in X}|f(x)|$. 

\begin{lemma}

Suppose $P_{\mu}$ is irreducible and lazy such that $P_{\mu}(x,x)\ge\frac{1}{2}$
for all $x\in X$. If $(X,P_{\mu})$ is Liouville, then for any two
points $x,y\in X$, 
\[
\lim_{n\to\infty}\left\Vert \mu^{(n)}\cdot x-\mu^{(n)}\cdot y\right\Vert _{1}=0.
\]

\end{lemma}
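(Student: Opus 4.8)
The plan is to show that the statement is essentially a consequence of the Liouville property combined with the standard fact that, for lazy irreducible chains, the tail (time-)shift converges. Let me sketch the approach.

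The plan is to combine three ingredients: a contraction (monotonicity) property of total variation along the chain, a compactness extraction of a bounded harmonic function, and a quantitative consequence of laziness that forces consecutive step-distributions to collapse. Throughout I write $d_n(x,y)=\left\Vert \mu^{(n)}\cdot x-\mu^{(n)}\cdot y\right\Vert _1=\left\Vert P_\mu^n(x,\cdot)-P_\mu^n(y,\cdot)\right\Vert _1$, so that the claim is $d_\infty(x,y):=\lim_n d_n(x,y)=0$.

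First I would record that $d_n(x,y)$ is non-increasing in $n$. Since $P_\mu^{n+1}(x,\cdot)=P_\mu^n(x,\cdot)P_\mu$ and the operator $\nu\mapsto\nu P_\mu$ is an $\ell^1$-contraction on signed measures (each row of $P_\mu$ is a probability measure, so $\sum_w|\sum_z\nu(z)P_\mu(z,w)|\le\sum_z|\nu(z)|$), applying $P_\mu$ to the signed measure $P_\mu^n(x,\cdot)-P_\mu^n(y,\cdot)$ only decreases its $\ell^1$ norm. Hence $d_n(x,y)$ decreases to a limit $d_\infty(x,y)\ge0$, and it suffices to show $d_\infty(x,y)=0$ for every pair. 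This step uses neither irreducibility nor laziness.

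Next, arguing by contradiction, suppose $d_\infty(x_0,y_0)=c>0$ for some pair. For each $N$ set $f_N=\operatorname{sign}\bigl(P_\mu^N(x_0,\cdot)-P_\mu^N(y_0,\cdot)\bigr)$, so that $\|f_N\|_\infty\le1$ and the bounded functions $h_N:=P_\mu^N f_N$ satisfy $h_N(x_0)-h_N(y_0)=d_N(x_0,y_0)\ge c$. The $h_N$ lie in the unit ball of $\ell^\infty(X)$, so by a diagonal argument (pointwise compactness of $[-1,1]^X$) a subsequence converges pointwise to some $h$ with $\|h\|_\infty\le1$ and $h(x_0)-h(y_0)\ge c$, hence $h$ is non-constant. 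To contradict the Liouville hypothesis I must show $h$ is harmonic, $P_\mu h=h$. The key estimate is
\[
|P_\mu h_N(x)-h_N(x)|=\bigl|\langle P_\mu^{N+1}(x,\cdot)-P_\mu^N(x,\cdot),\,f_N\rangle\bigr|\le\left\Vert P_\mu^{N+1}(x,\cdot)-P_\mu^N(x,\cdot)\right\Vert _1,
\]
so it is enough to show the right-hand side tends to $0$; granting this, bounded convergence lets me pass $P_\mu$ through the pointwise limit to get $P_\mu h-h=\lim_N(P_\mu h_N-h_N)=0$.

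The remaining step, which is where laziness enters and which I regard as the heart of the argument, is the claim $\left\Vert P_\mu^{N+1}(x,\cdot)-P_\mu^N(x,\cdot)\right\Vert _1\to0$ uniformly in $x$. Writing $P_\mu=\tfrac12(I+Q)$ with $Q=2P_\mu-I$ a genuine stochastic matrix (this is exactly the content of $P_\mu(x,x)\ge\tfrac12$), one expands $P_\mu^N=\sum_k b_N(k)\,Q^k$ where $b_N(k)=\binom{N}{k}2^{-N}$ is the Binomial$(N,1/2)$ weight. Since the $Q^k(x,\cdot)$ are probability measures, the triangle inequality gives $\left\Vert P_\mu^{N+1}(x,\cdot)-P_\mu^N(x,\cdot)\right\Vert _1\le\sum_k|b_{N+1}(k)-b_N(k)|$, and the recursion $b_{N+1}(k)=\tfrac12 b_N(k)+\tfrac12 b_N(k-1)$ together with unimodality of $b_N$ collapses this to $\max_k b_N(k)=\binom{N}{\lfloor N/2\rfloor}2^{-N}=O(N^{-1/2})$. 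This bound is uniform in $x$ and tends to $0$, completing the contradiction. The main obstacle is thus not any single hard estimate but the correct packaging: recognizing that $P_\mu^N f_N$ is the right family from which to extract a harmonic limit, and that laziness is precisely what kills the defect $\left\Vert P_\mu^{N+1}(x,\cdot)-P_\mu^N(x,\cdot)\right\Vert _1$.
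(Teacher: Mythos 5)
Your proof is correct, and it takes a genuinely different route from the paper. The paper works forward from the Liouville hypothesis through boundary theory: it first uses irreducibility to reduce to neighboring points $y=s\cdot x$, then invokes the equivalence of the Liouville property with triviality of the invariant $\sigma$-field, uses laziness to identify the tail and invariant $\sigma$-fields, and extracts the $\ell^1$ convergence from the almost sure convergence of the conditional probabilities $\mathbb{P}(W_1\cdot x=y_1\mid W_n\cdot x)$, finishing with the cited fact that laziness forces $\left\Vert P_\mu^{n-1}(z,\cdot)-P_\mu^{n}(z,\cdot)\right\Vert _1\to 0$. You instead prove the contrapositive by a functional-analytic compactness argument: if the total variation distance does not tend to zero, the functions $h_N=P_\mu^N f_N$ with $f_N=\operatorname{sign}\bigl(P_\mu^N(x_0,\cdot)-P_\mu^N(y_0,\cdot)\bigr)$ have a pointwise limit point that is bounded, separates $x_0$ from $y_0$, and is harmonic because the defect $\left\Vert P_\mu^{N+1}(x,\cdot)-P_\mu^{N}(x,\cdot)\right\Vert _1$ vanishes. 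Your argument is self-contained where the paper leans on three theorems from Lyons--Peres: in particular you reprove the laziness estimate directly via the binomial expansion $P_\mu^N=\sum_k\binom{N}{k}2^{-N}Q^k$ with the explicit rate $O(N^{-1/2})$, and you dispense with irreducibility altogether, since the harmonic-function extraction works for an arbitrary pair of points rather than requiring reduction to neighbors. What the paper's approach buys in exchange is the connection to the tail and invariant $\sigma$-fields and the entropy-theoretic tradition of Kaimanovich--Vershik, which is the conceptual frame for the rest of the article; yours buys elementarity, a quantitative bound, and slightly weaker hypotheses. Both arguments use laziness in exactly the same place and for exactly the same purpose, namely to kill the one-step discrepancy between consecutive convolution powers.
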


\begin{proof}

Since $P_{\mu}$ is irreducible, it is sufficient to prove the claim
for $y=s\cdot x$ with $s\in\mbox{supp}(\mu)$. Let $\left(W_{n}\right)$
be a left random walk on $G$ with step distribution $\mu$.

By Corollary 14.13 of \cite{LP:book}, $(X,P_{\mu})$ is Liouville
if and only if the invariant $\sigma$-field $\mathcal{I}$ is trivial.
Since $P_{\mu}$ is lazy, by Theorem 14.18 \cite{LP:book}, the completion
of the tail $\sigma$-field coincides with the completion of the invariant
$\sigma$-field. Thus Liouville property of $(X,P_{\mu})$ implies
that the tail $\sigma$-field $\mathcal{T}$ is trivial. Therefore,
\[
\mathbb{P}(W_{1}\cdot x=y_{1}|W_{n}\cdot x)\to\mathbb{P}(W_{1}\cdot x=y_{1})
\]
almost surely when $n\to\infty$. Since 
\[
\mathbb{P}(W_{1}\cdot x=y_{1}|W_{n}\cdot x=y_{n})=\frac{P_{\mu}(x,y_{1})P_{\mu}^{n-1}(y_{1},y_{n})}{P_{\mu}^{n}(x,y_{n})},
\]
we have for $y_{1}=s\cdot x$, 
\[
\lim_{n\to\infty}\frac{P_{\mu}^{n-1}(s\cdot x,W_{n}\cdot x)}{P_{\mu}^{n}(x,W_{n}\cdot x)}=1\ a.s
\]
Then it implies for any $\epsilon>0$, 
\[
\lim_{n\to\infty}\mu^{(n)}\left(\left\{ g:\ \left|\frac{P_{\mu}^{n-1}(s\cdot x,g\cdot x)}{P_{\mu}^{n}(x,g\cdot x)}-1\right|\ge\epsilon\right\} \right)=0.
\]
It follows that 
\[
\lim_{n\to\infty}\left\Vert P_{\mu}^{n-1}(s\cdot x,\cdot)-P_{\mu}^{n}(x,\cdot)\right\Vert _{1}=0.
\]
Finally, by Theorem 14.16 in \cite{LP:book}, laziness of $P_{\mu}$
guarentees that 
\[
\left\Vert P_{\mu}^{n-1}(s\cdot x,\cdot)-P_{\mu}^{n}(s\cdot x,\cdot)\right\Vert _{1}\to0
\]
when $n\to\infty$.

\end{proof}

In the other direction, we can build a measure $\mu$ on $G$ such
that $P_{\mu}$ is Liouville from pieces that have good coupling properties.

\begin{lemma}\label{mainlemma}

Suppose there exists an increasing sequence of finite subsets $\left(K_{n}\right)$
exhausting $X$ and a sequence $\left(\epsilon_{n}\right)$ decreasing
to $0$ such that for each $n$, there exists a probability measure
$\nu_{n}$ of finite support on $G$ such that for any $x,y\in K_{n}$
such that $y=s\cdot x$ for some $s\in S$, we have 
\[
\left\Vert \nu_{n}\cdot x-\nu_{n}\cdot y\right\Vert _{1}<\epsilon_{n}.
\]
Then there exists a non-degenerate probability measure $\mu$ on $G$
such that $(X,P_{\mu})$ is Liouville.

\end{lemma}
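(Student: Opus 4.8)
The plan is to reduce the Liouville property to an $\ell^1$-contraction statement and then build $\mu$ by gluing the $\nu_n$ together so that two coupled copies of the walk eventually coalesce. First I would record the converse of the previous lemma: if $P_\mu$ is irreducible and $\left\Vert \mu^{(k)}\cdot x-\mu^{(k)}\cdot y\right\Vert_1\to 0$ for all $x,y$, then $(X,P_\mu)$ is Liouville. This direction is elementary, since any bounded $P_\mu$-harmonic $f$ satisfies $f(x)=\sum_z f(z)\,(\mu^{(k)}\cdot x)(z)$ for every $k$, whence
\[
|f(x)-f(y)|\le \|f\|_\infty\,\left\Vert \mu^{(k)}\cdot x-\mu^{(k)}\cdot y\right\Vert_1\longrightarrow 0,
\]
forcing $f$ to be constant (the action being transitive). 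Moreover it suffices to prove the limit for $S$-adjacent pairs $y=s\cdot x$, because any two points are joined by a finite $S$-path and the $\ell^1$-distances add along it. So the task becomes: construct a non-degenerate $\mu$ with $\left\Vert \mu^{(k)}\cdot x-\mu^{(k)}\cdot (s\cdot x)\right\Vert_1\to 0$ for every $x$ and $s\in S$.

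For the construction I would set $\mu=\theta\,\eta+(1-\theta)\sum_{j\ge 1}a_j\nu_{n_j}$, where $\eta$ is the uniform measure on $S\cup S^{-1}\cup\{e\}$ (inserted with small weight $\theta$ to force non-degeneracy and laziness), $n_1<n_2<\cdots$ is a rapidly growing subsequence, and $a_j>0$ with $\sum_j a_j=1$; the subsequence and the weights are the free parameters, tuned at the very end.

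To control $\left\Vert \mu^{(k)}\cdot x-\mu^{(k)}\cdot y\right\Vert_1$ I would run two copies $X_k,Y_k$ started at $x$ and $y=s\cdot x$ under a common source of randomness. At each step one samples which block is used; if $X_k=Y_k$ one applies the same group element, so they stay equal forever after; if $X_k\ne Y_k$ and the chosen block is $\nu_{n_j}$ with $X_k,Y_k\in K_{n_j}$, one uses an optimal coupling of $\nu_{n_j}\cdot X_k$ and $\nu_{n_j}\cdot Y_k$, which coalesces with probability at least $1-\tfrac12\,d(X_k,Y_k)\,\epsilon_{n_j}$, where $d$ is the Schreier-graph distance and the bound comes from chaining the hypothesis along a path joining $X_k$ to $Y_k$ inside $K_{n_j}$. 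Since the optimal coupling preserves marginals, each coordinate performs a genuine $\mu$-step, so the laws of $X_k$ and $Y_k$ are exactly $\mu^{(k)}\cdot x$ and $\mu^{(k)}\cdot y$; hence $\left\Vert \mu^{(k)}\cdot x-\mu^{(k)}\cdot y\right\Vert_1\le 2\,\mathbb{P}(X_k\ne Y_k)$, and as coalescence is permanent it is enough to show the two walks coalesce almost surely.

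The crux, and the step I expect to be hardest, is that before coalescing the two walks drift apart: each use of a low-index block or of the generating term $\eta$ can enlarge $d(X_k,Y_k)$ by a bounded amount, so this distance may grow (linearly in the worst case), whereas a successful merge at scale $n_j$ needs both walks to lie in $K_{n_j}$ and $\tfrac12\,d(X_k,Y_k)\,\epsilon_{n_j}$ to be small. Winning this race is where the freedom in the parameters is spent. I would choose the subsequence so that $K_{n_{j+1}}$ contains a large neighbourhood of $K_{n_j}$ and $\epsilon_{n_{j+1}}$ is negligible against the distance reachable by that time, and choose the weights $a_j$ to decay slowly enough that every scale is used infinitely often — so that a conditional Borel–Cantelli argument makes some merge attempt succeed almost surely — yet fast enough that the expected drift (weighted by the sizes of the supports of the $\nu_{n_j}$) remains under control. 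Balancing the drift of the unmerged pair against recurrent, high-probability merge attempts at an ever-increasing scale is the technical heart of the argument; the remaining points — non-degeneracy of $\mu$, the marginal identity, and the final $\ell^1$ bound — are then routine.
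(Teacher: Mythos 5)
Your reduction (bounded harmonic $f$ gives $|f(x)-f(y)|\le\|f\|_\infty\|\mu^{(k)}\cdot x-\mu^{(k)}\cdot y\|_1$, and it suffices to treat $S$-adjacent pairs) and the shape of $\mu$ as a convex combination of the $\nu_{n_j}$ plus a generating piece both match the paper. But the step you yourself flag as ``the technical heart'' --- winning the race between the drift of the uncoalesced pair and the recurrent merge attempts --- is exactly the content of the lemma, and you leave it unproved. Worse, the route you choose makes that step genuinely harder than it needs to be. Two concrete problems: (i) under an optimal coupling of $\nu_{n_j}\cdot X_k$ and $\nu_{n_j}\cdot Y_k$, conditioned on \emph{not} coalescing the two walks can land anywhere in the supports, so a failed merge attempt at scale $n_j$ can push $d(X_k,Y_k)$ up to order $R_{n_j}$ (the displacement of $\nu_{n_j}$), not just by a bounded amount; your subsequent attempts then need $\epsilon_{n_{j'}}\ll 1/R_{n_j}$ \emph{and} both walks, together with a connecting path, inside $K_{n_{j'}}$ at a random time, which is the part that requires an actual argument. (ii) The chaining bound $\|\nu_{n_j}\cdot u-\nu_{n_j}\cdot v\|_1\le d(u,v)\epsilon_{n_j}$ needs the whole path from $u$ to $v$ to lie in $K_{n_j}$, not just the endpoints; you need to guarantee this at the (random) time of each attempt. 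Neither point is fatal, but as written the proof is an announced plan, not a proof.

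The paper avoids the race entirely by asking for less: it only shows $\liminf_m\|\mu^{(m)}\cdot x-\mu^{(m)}\cdot y\|_1=0$ along a deterministic sequence $m=m_j$, which already forces $h(x)=h(y)$ since $h=P_\mu^m h$ for every $m$. Expand $\mu^{(m_j)}=\sum c_{k_1}\cdots c_{k_{m_j}}\zeta_{k_{m_j}}\ast\cdots\ast\zeta_{k_1}$ and split according to whether some index $k_i\ge j$ appears. The complementary part has total mass $(c_0+\cdots+c_{j-1})^{m_j}\le 1/j$ by the choice of $m_j$. For the main part, condition on the \emph{first} time $i$ with $k_i\ge j$: up to then only blocks of index $<j$ have been used, so both copies of the walk have moved at most $(m_j-1)R_{n_{j-1}}$ steps and sit at distance at most $2(m_j-1)R_{n_{j-1}}+1$ from each other, inside a ball contained in $K_{n_j}$ (this is a worst-case deterministic bound, needing no control of a coupled trajectory); one application of $\zeta_{k_i}=\nu_{n_{k_i}}$ with $\epsilon_{n_{k_i}}\le\epsilon_{n_j}$ then brings the $\ell^1$ distance of the two distributions down to about $2m_jR_{n_{j-1}}\epsilon_{n_j}\le 2/j$ by chaining along a path inside $K_{n_j}$, and every subsequent convolution only contracts the $\ell^1$ norm, so the gain is permanent. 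Summing the two parts gives $\|\mu^{(m_j)}\cdot x-\mu^{(m_j)}\cdot y\|_1\le 3/j$. If you want to salvage your coupling picture, the fix is the same idea: do not try to coalesce almost surely; just run the two walks with identical increments until the first high-index block, merge once with high probability there, and note that the only failure mode is that no high-index block occurs in $m_j$ steps.
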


\begin{proof}

Our proof is reminiscent to the proof of Theorem 4.3 in \cite{KaimVersh}.
The measure $\mu$ is obtained as a convex combination of a subsequence
of $\left(\nu_{n}\right)$, 
\[
\mu=\sum_{j=0}^{\infty}c_{j}\zeta_{j},\ \zeta_{j}=\nu_{n_{j}}
\]
To make $\mu$ non-degenerate, we take $\nu_{0}$ to be uniform on
$S\cup S^{-1}$.

First note that to show $P_{\mu}$ is Liouville, it suffices to show
for any $x,y\in X$ connected by an edge, $y=s\cdot x$ for some $s\in S$,
we have 
\[
\liminf_{m\to\infty}\left\Vert \mu^{(m)}\cdot x-\mu^{(m)}\cdot y\right\Vert _{1}=0.
\]
(This implies that for any bounded $P_{\mu}$-harmonic function $h$,
$h(x)=h(y)$ for neighboring points, thus the function must be constant.)

Since each $\nu_{n}$ is assumed to be of finite support on $G$,
let $B(e,r_{n})$ be a ball large enough on $G$ such that $\mbox{support}\nu_{n}\subset B(e,R_{n})$.
Let $r_{n}$ be the largest ball such that $B(o,r_{n})\subseteq K_{n}$,
we have $r_{n}\to\infty$ since $\left(K_{n}\right)$ exhausts $X$.
Fix a consequence of weights $\left(c_{j}\right)$, select $(n_{j})$
inductively as follows: let $m_{j}$ be the smallest integer such
that $\left(c_{0}+\ldots+c_{j-1}\right)^{m_{j}}\le1/j$, take $n_{j}$
to be the least integer such that 
\[
m_{j}R_{n_{j-1}}\le r_{n_{j}}\mbox{ and }m_{j}R_{n_{j-1}}\epsilon_{n_{j}}\le1/j.
\]

For $m$-th convolution power of $\mu$, 
\[
\mu^{(m)}=\sum_{k}c_{k_{1}}\ldots c_{k_{m}}\zeta_{k_{m}}\ast\ldots\ast\zeta_{k_{1}}.
\]
Consider two parts, $\mu_{1}^{(m)}$ consists these terms with $\max_{1\le i\le m}k_{i}\ge j$,
and $\mu_{2}^{(m)}=\mu^{(m)}-\mu_{1}^{(m)}$. For $m=m_{j}$, the
total mass of the first part is 
\[
\left\Vert \mu_{1}^{(m_{j})}\right\Vert _{1}=\left(c_{1}+\ldots+c_{j-1}\right)^{m_{j}}.
\]
For each term in the second part, let $i=i(k)$ be the lowest index
such that $k_{i}\ge j$. Starting at two neighboring points $x,y$,
consider the distribution induced by $\zeta_{k_{i}}\ast\ldots\ast\zeta_{k_{1}}$
on these two points. Since for $i<k$, $k_{i}<j$, it follows that
the support of $\zeta_{k_{i-1}}\ast\ldots\ast\zeta_{k_{1}}\cdot x$
and $\zeta_{k_{i-1}}\ast\ldots\ast\zeta_{k_{1}}\cdot y$ are contained
in the ball $B_{X}\left(o,d(o,x)+1+(m_{j}-1)R_{n_{j-1}-1}\right)$.
By the choice of the $\left(\zeta_{n}\right)$, we have for $k_{i}\ge j$,
\[
\left\Vert \zeta_{k_{i}}\ast\zeta_{k_{i-1}}\ast\ldots\ast\zeta_{k_{1}}\cdot x-\zeta_{k_{i}}\ast\zeta_{k_{i-1}}\ast\ldots\ast\zeta_{k_{1}}\cdot y\right\Vert _{1}\le2(m_{j}-1)R_{n_{j-1}-1}\epsilon_{n_{j}}.
\]
Combine the two parts, we have 
\[
\left\Vert \mu^{(m_{j})}\cdot x-\mu^{(m_{j})}\cdot y\right\Vert _{1}\le\left(c_{1}+\ldots+c_{j-1}\right)^{m_{j}}+2(m-1)R_{\ell_{m}-1}\epsilon_{\ell_{m}}\le3/j
\]

\end{proof}

\section{Applications to the Thompson group F}

We denote by $F$ the Thompson group $F$. In \cite{Kaimanovich}
and \cite{Mishchenko}, authors show that the Schreier graph of the
action of $F$ on the orbit of $1/2$ is not Liouville with respect
to any measure of finite support. Here we show that there are measures
with infinite support that make this action Liouville. In fact, one
can even choose symmetric ones.

\begin{theorem} There is a non-degenerate symmetric measure $\mu$
on Thompson group $F$ such that the action on $Orb(1/2)$ is $\mu$-Liouville.
\end{theorem}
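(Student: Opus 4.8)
The plan is to apply Lemma~\ref{mainlemma} with $X=Orb(1/2)$, which I identify with the dyadic rationals $\Z[1/2]\cap(0,1)$, the standard generating set $S=\{x_0,x_1\}$, and $K_n$ the finite set of dyadic rationals of depth at most $n$ (denominator dividing $2^n$); these exhaust $X$. By the lemma it then suffices to produce, for each $n$ and some $\epsilon_n\to0$, a finitely supported measure $\nu_n$ on $F$ with $\left\Vert \nu_n\cdot x-\nu_n\cdot y\right\Vert_1<\epsilon_n$ whenever $x,y\in K_n$ are joined by an $x_0^{\pm1}$- or $x_1^{\pm1}$-edge. To obtain symmetry as well, I would build every $\nu_n$ as a palindromic convolution of symmetric measures, so that $\check\nu_n=\nu_n$ automatically, and take the auxiliary piece uniform on $S\cup S^{-1}$ exactly as in the proof of Lemma~\ref{mainlemma}; the resulting $\mu$ is then non-degenerate and symmetric.

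The structural input is the description of the action on binary strings: $x_0$ acts by the usual prefix substitutions, while $x_1$ applies the same substitutions to the tail following a leading $1$ and fixes strings beginning with $0$. Consequently $\langle x_0\rangle$, and the ``tail copy'' $\langle x_1\rangle$, act as $\Z$-translations along bi-infinite backbone lines, and every $x_0$-edge (respectively $x_1$-edge) is a nearest-neighbour pair on such a line. This reduces the coupling of a single edge to the classical fact that a lazy nearest-neighbour walk on $\Z$ couples neighbours: writing $Q^{(i)}_N$ for the law of $x_i^{\xi_N}$, where $\xi_N$ is a lazy simple random walk on $\Z$ at time $N$, one has $\left\Vert Q^{(i)}_N\cdot x-Q^{(i)}_N\cdot y\right\Vert_1\to0$ as $N\to\infty$ for an $x_i$-edge $(x,y)$, and each $Q^{(i)}_N$ is finitely supported and symmetric.

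The construction of $\nu_n$ then assembles these one-dimensional couplings. Since a Markov operator is an $\ell^1$-contraction, convolving further on the left never increases an $\ell^1$-distance already achieved, so one coupling block can be shielded by later ones. The difficulty is that $x_0$ and $x_1$ do not commute, so a block coupling the $x_1$-edges will in general re-spread a pair that an earlier block had coupled along the $x_0$-direction, and vice versa. I would handle this by alternating long symmetric blocks $Q^{(0)}_N$ and $Q^{(1)}_N$ in a palindromic word and exploiting the self-similar nesting: the support of $x_1$ lies in $[1/2,1]$, and driving mass along the $x_0$-backbone carries a definite proportion of it into $[0,1/2)$, where $x_1$ is the identity, so repeated alternation renders each generator progressively transparent to the other. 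Calibrating the block lengths $N$ and the number of alternations to the depth $n$ (so that only the finitely many scales meeting $K_n$ need be treated) should yield a single finitely supported $\nu_n$ coupling all edges of $K_n$ to within $\epsilon_n$.

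The main obstacle is precisely this \emph{simultaneous, uniform} coupling of the two non-commuting edge-directions by one finitely supported measure: it is the point at which the amenability of the Schreier graph, as opposed to the unknown amenability of $F$ itself, must be converted into quantitative finite couplings, and where the calibration of scales --- in the spirit of the Theorem~4.3 of \cite{KaimVersh} already invoked in Lemma~\ref{mainlemma} --- carries the real weight. Once the family $(\nu_n)$ is in hand, Lemma~\ref{mainlemma} delivers the desired non-degenerate symmetric Liouville measure $\mu$.
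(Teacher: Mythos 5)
Your reduction to Lemma~\ref{mainlemma}, the choice of exhaustion $K_n$, and the one--dimensional input (a lazy nearest--neighbour walk on $\mathbb{Z}$ couples neighbours, so finitely supported symmetric measures on powers of a single generator handle edges lying along a single $\mathbb{Z}$--line) are all correct. But the proof is not complete: the step you yourself flag as ``the main obstacle'' --- producing \emph{one} finitely supported $\nu_n$ that couples the $x_0$--edges and the $x_1$--edges of $K_n$ simultaneously --- is exactly where the content of the theorem lies, and your proposed resolution (alternating palindromic blocks $Q^{(0)}_N$ and $Q^{(1)}_N$, with the heuristic that pushing mass into $[0,1/2)$ renders $x_1$ progressively transparent) is only a sketch. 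The $\ell^1$--contraction argument guarantees that a later block cannot undo the coupling of the two distributions started at a given pair $(x,y)$, but it says nothing about whether a $Q^{(1)}_N$--block destroys the closeness achieved by a $Q^{(0)}_N$--block for a pair joined by an $x_1$--edge, since the two generators do not commute and the supports after the first block are spread over many points with different local geometry. ``Calibrating the block lengths \dots should yield'' is an unproved claim, and it is far from clear it can be made to work as stated.

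The paper sidesteps this difficulty entirely with one extra idea that your proposal is missing: strong transitivity of $F$ (Brin, \cite{Brin}). In Savchuk's description of the Schreier graph there are ``hairs,'' rays on which a single generator $g_0$ acts as translation on $\mathbb{Z}$. Given the finite set $K=K_n$, one chooses a \emph{single} element $g\in F$ carrying all of $K$ deep into one hair. The image $g(K)$ need not be connected, but it is a finite set of points on one $\mathbb{Z}$--line, so the uniform measure on $\{g_0^k:\ k\in[-N,N]\}$ couples any two of them to within any prescribed $\epsilon$ once $N$ is large. Since pushing forward by the bijection $g^{-1}$ of the orbit preserves $\ell^1$--distances, the measure $\nu_n$ uniform on $\{g^{-1}g_0^k g:\ k\in[-N,N]\}$ (which is automatically symmetric) satisfies the hypothesis of Lemma~\ref{mainlemma} for \emph{all} edges of $K_n$ at once, regardless of which generator they come from. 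Conjugation thus converts the two non--commuting edge--directions into a single $\mathbb{Z}$--direction, and the simultaneous coupling problem you struggled with never arises. If you want to salvage your write--up, replace the alternating--blocks construction by this conjugation trick; the rest of your argument then goes through.
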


\begin{proof} By Lemma \ref{mainlemma}, we have to find a measure
that approximates any finite subset in $Orb(1/2)$. The Schreier graph
of $Orb(1/2)$ was described by Savchuk, see \cite{Savchuk}. There
are two parts of the graph: the one that corresponds to the binary
tree and another is rays attached to every node of the tree. These
ray imitate positive part of the Cayley graph of $\mathbb{Z}$, and
we will call them hairs.\\

Let $K\subset Orb(1/2)$. Since $F$ is strongly transitive (see \cite{Brin}),
we can find an element $g$ that maps this set to the hair. We can
assume that this set is mapped deep enough into the hair. One the
hairs one of the generators, say $g_{0}$ (in the notations of Savchuk),
act as $\mathbb{Z}$. The set $g(K)$ might not be connected, but
it is clear that the uniform measure on $\{g_{0}^{k}:k\in[-n,n]\}$
will satisfy Lemma \ref{mainlemma} for sufficiently large $n$, therefore
the uniform measure on $\{g_{0}^{k}g:k\in[-n,n]\}$ is Liouville.
We can make it symmetric by taking $\{g^{-1}g_{0}^{k}g:k\in[-n,n]\}$
\end{proof}

\section{Liouville non-amenable Schreier graphs and lamplighters}

As another application of Lemma \ref{mainlemma} we show that certain
Schreier graphs of lamplighter group are non-amenable but $\mu$-Liouville
for some infinitely supported measure $\mu$. We note that examples
of non-amenable graphs with Liouville measures were previously discovered
in \cite{Ben-Koz}. There are many examples when a non-amenable group
$G$ admits an action on a set $X$ such that the induced action of
the lamplighter group $\bigoplus_{X}\mathbb{Z}/2\mathbb{Z}\rtimes G$
on $\bigoplus_{X}\mathbb{Z}/2$ is not amenable. In fact, if the action
of $G$ on $X$ is not amenable then the action of $\bigoplus_{X}\mathbb{Z}/2\mathbb{Z}\rtimes G$
on $\bigoplus_{X}\mathbb{Z}/2$, see for example \cite{JS}, \cite{JMMS}.
However, this action always admits Liouville measure.

\begin{lemma}

Consider the semi-direct product $G\ltimes A$ with $G$ discrete and
$A$ amenable. Then there exists a non-degenerate probability measure
$\mu$ on $G\ltimes A$ such that the action of $G\ltimes A$ on $A$
is $\mu$-Liouville.

\end{lemma}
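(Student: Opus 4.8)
The plan is to apply Lemma \ref{mainlemma} to the action of $G \ltimes A$ on $A$. Since $A$ is amenable and acts on itself by translation, the core idea is to exploit the amenability of $A$ to construct, for each finite subset, a measure on $G \ltimes A$ whose projected random walk has the near-coupling property required by the lemma. The generating set $S$ of $G \ltimes A$ contains generators of both the $G$-part and the $A$-part. For edges coming from the $A$-generators, the two endpoints $x$ and $y = s \cdot x$ differ by left-translation in $A$, so a Følner-type averaging measure supported on $A$ will couple them well. The harder edges are those coming from $G$-generators, where $x$ and $s \cdot x$ differ by the $G$-action on $A$.

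First I would fix a finite exhaustion $(K_n)$ of $A$ and, for each $n$, choose a finite subset $F_n \subset A$ that is sufficiently Følner, i.e. $|s \cdot F_n \triangle F_n|/|F_n| < \epsilon_n$ for the relevant generators, where $s\cdot F_n$ denotes the translate. Let $\nu_n$ be (essentially) the uniform measure on $F_n$, viewed as a measure on the subgroup $A \le G \ltimes A$. For two neighboring points $x, y = a \cdot x$ with $a$ an $A$-generator, the pushforward $\nu_n \cdot x$ is the uniform measure on $F_n \cdot x$, and $\nu_n \cdot y$ is uniform on $(F_n a) \cdot x$; the $\ell^1$ distance between these is controlled by the Følner ratio and can be made less than $\epsilon_n$.

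The main obstacle is handling the edges coming from $G$-generators, since the $G$-action on $A$ need not preserve the Følner sets, and indeed $G$ may be non-amenable (as in the motivating lamplighter examples). My plan here is to symmetrize or enlarge $\nu_n$ by convolving with a finitely supported measure on $G$, or to observe that in the semidirect product the element $s \in G$ satisfies $s \cdot (F_n \cdot x) = (s \cdot_G F_n) \cdot (s \cdot x)$ where $s \cdot_G F_n$ is the image of $F_n$ under the $G$-action automorphism. Because the $G$-action on $A$ is by automorphisms, I would choose the Følner sets $F_n$ to be approximately invariant under the finite set of automorphisms induced by $S \cap G$; such sets exist precisely because $A$ is amenable and the action is by finitely many automorphisms, so one can pass to a Følner sequence that is asymptotically invariant under this finite group of automorphisms as well. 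This reduces the $G$-generator edges to the same Følner estimate.

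Once these coupling estimates are established for all generators in $S$ uniformly over $K_n$, the hypotheses of Lemma \ref{mainlemma} are satisfied with the chosen $(K_n)$, $(\epsilon_n)$, and $(\nu_n)$, and the conclusion — existence of a non-degenerate $\mu$ making the action $\mu$-Liouville — follows immediately. I expect the only genuinely delicate point to be arranging a single Følner sequence in $A$ that is simultaneously almost-invariant under left translation by $A$-generators and under the finitely many automorphisms coming from $G$-generators; this is where the amenability of $A$ is used in full strength, and I would state it as the key claim before feeding the resulting $\nu_n$ into Lemma \ref{mainlemma}.
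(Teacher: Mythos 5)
Your overall strategy --- feeding F{\o}lner-based measures $\nu_n$ supported on $A$ into Lemma \ref{mainlemma} --- is the same as the paper's, but your treatment of the $G$-generator edges contains a genuine error, and the fix you propose is impossible in precisely the cases the lemma is designed for. First, you misidentify the quantity that must be small. Lemma \ref{mainlemma} asks you to compare $\nu_n\cdot x$ with $\nu_n\cdot y$, i.e.\ the \emph{same} measure $\nu_n$ started at the two endpoints $x,y\in K_n$; it does not ask for any equivariance of $\nu_n$ under the generator $s$. The quantity you compute, $s\cdot(F_n\cdot x)=(s\cdot_G F_n)\cdot(s\cdot x)$, is $\delta_s\ast\nu_n$ applied to $x$, which is not $\nu_n$ applied to $y=s\cdot x$. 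The latter is simply the uniform measure on $F_n\,y$ (the elements of $A\le G\ltimes A$ act on the coset space $A$ by translation), and since both $x$ and $y$ are required to lie in $K_n$, one has
\[
\left\Vert \mathrm{unif}(F_n x)-\mathrm{unif}(F_n y)\right\Vert_1=\frac{|F_n\triangle F_n(yx^{-1})|}{|F_n|},
\]
which only requires $F_n$ to be (right-)F{\o}lner with respect to the finite set $K_nK_n^{-1}$. This is exactly what the paper does: amenability of $A$ yields $\nu_n$ with $\sup_{x,y\in K_n}\Vert\nu_n\cdot x-\nu_n\cdot y\Vert_1\le 1/n$ over \emph{all} pairs in $K_n$, so the identity of the generator joining $x$ to $y$ is irrelevant and the ``$G$-edges'' need no separate treatment.

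Second, the ``key claim'' you isolate --- a F{\o}lner sequence in $A$ asymptotically invariant under the automorphisms induced by $S\cap G$ --- is false in general. The group generated by finitely many automorphisms need not be finite, and a weak-$*$ limit of such a sequence would produce a mean on $A$ invariant under both $A$-translations and the $G$-action, that is, an invariant mean for the action of $G\ltimes A$ on $A$. In the paper's motivating examples, $\bigoplus_X\mathbb{Z}/2\mathbb{Z}\rtimes G$ with $G\curvearrowright X$ non-amenable, that action is non-amenable (as cited from \cite{JS}, \cite{JMMS}), so no such F{\o}lner sequence can exist. Fortunately, as explained above, you never need one.
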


\begin{proof}

Fix a sequence of finite subsets $\left(K_{n}\right)$ that exhaust
$A$. Since $A$ is amenable, we can find a sequence of measures $\left(\nu_{n}\right)$
on $A$ such that 
\[
\sup_{x,y\in K_{n}}\left\Vert \nu_{n}\cdot x-\nu_{n}\cdot y\right\Vert _{\ell^{1}(A)}\le\frac{1}{n}.
\]
Regard $\nu_{n}$ as a measure supported on $\{(e_{G},a):\ a\in A\}$,
then by Lemma \ref{mainlemma} we can find a non-degenerate measure
$\mu$ on $G\ltimes A$ such that $P_{\mu}$ is Liouville.

\end{proof}

Let $G$ be an amenable group. The every action of $G$ admits a Liouville
measure (possibly infinitely supported), moreover, the stabilizers
of the action are amenable. \\

While the following questions should have a negative answer, we currently
don't have any examples to support it.

\begin{question} Let $G$ act transitively on a set $X$ and assume
that this action is $\mu$-Liouville action of $G$ on $X$ for some
measure $\mu$ on $G$ such that $Stab_{G}(x)$ is abelian for some
(equivalently for all) $x$ in $X$. Is $G$ amenable? \end{question}

\begin{question} Let $G$ act transitively on a set $X$ and assume
that this action is $\mu$-Liouville action of $G$ on $X$ for some
measure $\mu$ on $G$. Denote by $X_{n}$ the set of all finite subsets
of size $n$. Then $G$ acts on $X_{n}$, however, this action may
not be transitive. Is it true that the action of $G$ on orbit of
$x\in X_{n}$ is Liouville for some measure? \end{question}

\end{document}